\newtheorem{tm}{Theorem}[section]
\newtheorem{pr}[tm]{Proposition}
\numberwithin{equation}{section}
\newcommand*{\pf}{\mathfrak p}
\newcommand*{\qf}{\mathfrak q}
\newcommand*{\hop}{\bigskip\noindent}
\newcommand*{\e}[1]{\text{\rm e}^{#1}}
\newcommand*{\rup}{r_+^\uparrow}
\newcommand*{\rnup}{r_0^\uparrow}
\newcommand*{\rdn}{r^\downarrow}
\newcommand*{\Om}{\Omega}
\newcommand*{\om}{\omega}
\newcommand*{\ze}{\zeta}
\newcommand*{\vp}{\varphi}
\newcommand*{\vr}{\varrho}
\newcommand*{\te}{\theta}
\newcommand*{\la}{\lambda}
\newcommand*{\be}{\beta}
\newcommand*{\si}{\sigma}
\newcommand*{\Rb}{\mathbb R}
\newcommand*{\Zb}{\mathbb Z}
\newcommand*{\ac}{\mathfrak a}
\newcommand*{\bc}{\mathfrak b}
\newcommand*{\Hc}{\mathcal H}
\newcommand*{\Ev}{{\bf E}}
\newcommand*{\un}[1]{\underline{#1}}
\newcommand*{\di}{\,\text{\rm d}}
\begin{document}

\title{q-zero range has random walking shocks}

\author{M\'arton Bal\'azs\footnotemark[1], Lewis Duffy\footnotemark[1] \ and Dimitri Pantelli\footnotemark[1]\thanks{University of Bristol, UK. M.B.\ was partially supported by EPSRC's EP/R021449/1 Standard Grant.}}

\maketitle

\begin{abstract}
 \dots but no other surprises in the zero range world. We check all nearest neighbour 1-dimensional asymmetric zero range processes for random walking product shock measures as demonstrated already for a few cases in the literature. We find the totally asymmetric version of the celebrated q-zero range process as the only new example besides an already known model of doubly infinite occupation numbers and exponentially increasing jump rates. We also examine the interaction of shocks, which appears somewhat more involved for q-zero range than in the already known cases.
\end{abstract}

\noindent {\bf Keywords:} Interacting particle systems, second class particle, shock measure, exact solution, q-zero range process

\hop
{\bf 2010 Mathematics Subject Classification:} 60K35, 82C23

\section{Introduction}

Since the early investigations of their hydrodynamic scaling limits (e.g., Rezakhanlou \cite{hl}), it has been clear that several 1-dimensional asymmetric interacting particle systems develop shocks; moving discontinuities of the rescaled density profile. Such shocks are very well understood on the hydrodynamic PDE level, but less so in the stochastic model itself. This microscopic phenomenon has been investigated by several authors, we give a brief summary of some of the important steps in the area.

Second class particles are coupling objects on the level of particles that trace the difference between two coupled instances of a particle model with slightly differing initial conditions. We describe the precise coupled dynamics for our models in Section \ref{sc:zrp}. Briefly, second class particles use up the difference of jump rates between the coupled pair of models. Their connections to shocks have a long history of exploration, we only mention a few instances of this here. Derrida et al.\ \cite{dls} investigated stationary distributions in the asymmetric simple exclusion process (ASEP) as seen by a \emph{second class particle}, and realised that their description greatly simplifies under certain condition on the jump of the densities in the shock. Namely, the stationary distribution becomes a product of Bernoulli distributions with different constant densities on the two sides of the second class particle. Bal\'azs \cite{valak} reproduced this result in the exponential bricklayers process (EBLP). Next, Belitsky and Sch\"utz \cite{qse} were able to transfer the result in a fixed frame of reference by observing that under the same condition on shock densities, the ASEP evolves a shock product distribution into translations of this same distribution, moreover the coefficients are the transition probabilities of a drifted simple random walk. We therefore call this phenomenon \emph{random walking shocks}, and will give its precise formulation in Section \ref{sc:rws}.

Belitsky and Sch\"utz \cite{qse} could also handle multiple shocks, exploring the interaction between them. Random walking shocks of ASEP interact like another ASEP, except the jump rates of these shock-walkers change with the rank as we consider them from left to right. This results in an attraction between the shock-random walkers, making them appear as one large shock after rescaling. Bal\'azs \cite{sokvalak} again reproduced this result in the EBLP and noted that, rather than excluding each other, in this model the random walkers almost act independently except for their rank-based jump rates. This again makes the walkers attract each other and a group of microscopic shocks appear as one large shock in the hydrodynamic scale. A precise statement for multiple random walking shocks follows in Section \ref{sc:rws}.

The random walking shock results had not used the second class particle until Bal\'azs et al.\ \cite{rwshscp} reproduced both random walking shock results (for ASEP and EBLP) with a second class particle added to the position where the densities jump. This on one hand simplified the arguments, on the other hand explained both the stationarity results from the viewpoint of the second class particle, and the random walking shock measures that first came without the second class particle. They also added a generalised exponential zero range process; a doubly infinite extension of zero range processes with an exponential form of the jump rates. This process came naturally as it has half the dynamics of EBLP. Certain other processes (e.g., \cite{rwshscp,Rakos2004}) are known to possess the random walking shock property, to the best knowledge of the authors none of which belong to the zero range family.

Stochastic duality, if discovered for a model, is a very powerful tool for the analysis of Markov processes. Given a process \(X(t)\) with state space \(\Om_X\), to establish stochastic duality one needs to find a dual process \(Y(t)\) on a state space \(\Om_Y\) and a duality function \(H\,:\,\Om_X\times\Om_Y\to\Rb\) such that for all \(t\ge0\) and \(x\in\Om_X\), \(y\in\Om_Y\),
\[
 \Ev_xH\bigl(X(t),\,y\bigr)=\Ev_yH\bigl(x,\,Y(t)\bigr).
\]
Here \(\Ev_x\) refers to expectation w.r.t.\ the dynamics of process \(X(\cdot)\) started from state \(X(0)=x\), and the meaning of \(\Ev_y\) is analogous. Often the model \(Y(\cdot)\) turns out to be simpler than \(X(\cdot)\), in which case various properties of this latter can be deduced from such a duality. A general account on duality for interacting particle systems can be found in Liggett's book \cite{ips}. The function \(H\) and the process \(Y(\cdot)\) are in general not easy to find. One of the early works discussing these in an asymmetric scenario is due to Sch\"utz \cite{schutz_dual_97}. Such duality properties made an important role in the proofs of \cite{qse}. Recently, systematic work on duality for particle systems has been performed, we cite Groenevelt \cite{groe_o_dual_18}, Redig and Sau \cite{red_sau_fact_dual_18,red_sau_dual_eigenf_18}, and references therein. Multispecies models (i.e., with second class particles) also possess duality properties as demonstrated by Belitsky and Sch\"utz \cite{beli_schutz_selfd_2comp_15} and later generalized by Kuan \cite{kuan_multi_aqj_18}. The importance of these developments for our context is highlighted in Belitsky and Sch\"utz \cite{bel_schutz_selfd_shockdyn_18}, where it becomes apparent that random walking shocks in ASEP, including the second class particle, are directly related to stochastic duality functions. The key observation is that certain duality functions can act as Radon-Nikodym derivatives between well-known stationary distributions and random walking shock measures. The dual process is (the time-reversal of) the random walkers themselves.

In this work we perform a systematic search within 1-dimensional nearest neighbour attractive zero range processes and find the totally asymmetric q-zero range (q-TAZRP) as the only new model that has the random walking shocks property. This model is remarkable for its algebraic (and now, probabilistic) properties. We do not use dualities, however in light of recent developments our results give strong indications regarding the nature of dualities behind q-TAZRP. Indeed, some details already emerged in private communications with Frank Redig -- further investigations will be part of future work.

We believe that our result and purely probabilistic approach will also nicely complement the more analytic research that has been done on q-TAZRP, some of which we briefly mention next.

Some of the early mentions of the model is by Povolotsky \cite{povo_bethe_zrp}, where it is found as one suitable for Bethe ansatz. Remarkably, q-TAZRP is one of the examples that satisfy the \emph{microscopic concavity property}, hence Kardar-Parisi-Zhang (KPZ)-scaling of current fluctuations could be proved using purely probabilistic methods in \cite{unipq3}. It also arises as the inter-particle distance process for q-TASEP, distortion of totally asymmetric simple exclusion and a very celebrated particle system in the integrable probability literature. Duality and determinantal formulas allow exact calculations in Borodin et al.\ \cite{borodin_corwin_sasamoto_duality} that show connections of q-TASEP to the celebrated KPZ equation. It has also been shown that q-TASEP arises as certain projection of McDonald processes, see Borodin and Corwin \cite{bo_co_mcdonald}. Bethe ansatz and integrable probability techniques allowed Korhonen and Lee \cite{kor_lee_trpr_l_qzrp} to derive exact formulas for certain transition probabilities in q-TAZRP. Ferrari and Vet\H o \cite{ferr_veto_tr-w_qtasep} and Barraquand \cite{barr_phtr_qtasep} proved Tracy-Widom limits for current fluctuations in q-TASEP, while recently Imamura and Sasamoto \cite{ima_sasa_fluct_qtasep} proved KPZ-related scaling limits of the tagged particle distribution. Finally, we mention Barraquand \cite{barr_phtr_qtasep} and Lee, Wang \cite{lee_wang_part_qzrp_sitedep} as investigations for the non-homogeneous version of the dynamics.

In this article we stick to simple probabilistic methods, to be more precise we apply the arguments of \cite{rwshscp} to carry out our search. For this reason at several points we simply refer to \cite{rwshscp} rather than repeat all arguments from therein. Our notation fully conforms \cite{rwshscp}. We start by recalling parts of the zero range framework in Section \ref{sc:zrp}, then state and prove our results in Sections \ref{sc:rws} and \ref{sc:pf}.

\section{Zero range process}\label{sc:zrp}

We consider 1-dimensional asymmetric zero range processes with site occupation numbers \(\om_i\) in the range either \(I=\Zb\) or \(I=\Zb^+=\{0,\,1,\,2,\,\dots\}\). Due to the zero range-type dynamics, these numbers cannot be bounded from above, while any finite lower bound can be transformed into 0 by a shift in the interpretation and the rate function of the jumps. Particles jump from site \(i\) with rate \(g(\om_i)\) choosing the right or left neighbouring site according to probabilities \(0<p\le1\) and \(1-p\). The rate function \(g\) is non-negative and we assume it to be non-decreasing and not the constant function throughout \(\Zb\). To preserve the state space, we also assume \(g(0)=0\) if \(I=\Zb^+\). The state space of the process is \(\Om=I^\Zb\), and the formal infinitesimal generator is
\[
 L\vp(\un\om)=\sum_{i=-\infty}^\infty pg(\om_i)\bigl[\vp\bigl(\un\om^{i,\,i+1}\bigr)-\vp(\un\om)\bigr]+ (1-p)g(\om_i)\bigl[\vp\bigl(\un\om^{i,\,i-1}\bigr)-\vp(\un\om)\bigr]
\]
with notation
\begin{equation}
 \bigl(\un\om^{i,\,j}\bigr)_k:\,=\left\{
  \begin{aligned}
   &\om_k-1,&&\text{for }k=i,\\
   &\om_k+1,&&\text{for }k=j,\\
   &\om_k,&&\text{otherwise.}
  \end{aligned}
 \right.\label{eq:ijch}
\end{equation}
Under appropriate initial data, the dynamics of this family has been constructed for \(g\)'s of bounded increments by Andjel \cite{and}, and of the very similar \emph{bricklayers processes} in the totally asymmetric case (i.e., \(p=1\)) up to exponentially growing \(g\)'s in Bal\'azs et al.\ \cite{exists}. Non-decreasing \(g\)'s make the model attractive, and it becomes a member of the family of misanthrope processes as described in \cite{rwshscp} with \(f=g\) from therein. The product measure \(\un\mu^\te:\,=\bigotimes_{i\in\Zb}\mu^\te\) with marginal
\begin{equation}
 \mu^\te(z)=\left\{
  \begin{aligned}
   &0,&&\text{if }z\notin I,\\
   &\frac1{Z(\te)}\cdot\frac{\e{\te z}}{g(z)!},&&\text{otherwise}
  \end{aligned}
 \right.\label{eq:mum}
\end{equation}
on site \(i\in\Zb\) is stationary for \(\un\te<\te<\bar\te\) with
\[
 \begin{aligned}
  \bar\te:&=\lim_{z\to\infty}\ln(g(z)),\quad\text{and}\\
  \un\te:&=\left\{
   \begin{aligned}
    &\lim_{z\to-\infty}\ln(g(z)),&&\text{if }\ I=\Zb,\\
    &-\infty,&&\text{if }\ I=\Zb^+.
   \end{aligned}
  \right.
 \end{aligned}
\]
This is the range that makes the sum of \(\mu^\te(z)\) convergent. Here we used the usual definition
\[
 g(z)!:\,=\prod_{y=1}^zg(y)\quad(z>0),\qquad g(0)!:\,=1,\qquad g(z)!:\,=\frac{1}{\prod\limits_{y=z+1}^0g(y)}\quad(z<0).
\]
Expectation w.r.t.\ this marginal will be denoted by \(\Ev^\te\).

We briefly comment on hydrodynamics of the model. Define the \emph{density of particles} under the product measure \(\un\mu^\te\) as \(\vr(\te):\,=\Ev^\te\om_i\). It is readily checked from the definition that this is a strictly increasing function; its inverse is denoted by \(\te(\vr)\). With the \emph{hydrodynamics flux}, which is the expected jump rate in stationarity at density \(\vr\):
\[
 \Hc(\vr):\,=\Ev^{\te(\vr)}\bigl(pg(\om_i)-(1-p)g(\om_i)\bigr),
\]
the density profile is expected to satisfy
\begin{equation}
 \partial_T\rho(T,\,X)+\partial_X\Hc(\rho(T,\,X))=0\label{eq:hcl}
\end{equation}
with the rescaled time and space variables \(T=\varepsilon t\) and \(X=\varepsilon i\) as \(\varepsilon\to0\). This has in fact been proved for several cases of the rates \(g\), see e.g.\ Rezakhanlou \cite{hl} or Bahadoran, Guiol, Ravishankar and Saada \cite{bagurasa} for details.

It has also been shown in \cite{convex} that \(\Hc\) is convex, resp.\ concave, if \(g\) is so. In both cases, similarly to the inviscid Burgers equation, \eqref{eq:hcl} develops discontinuous shock solutions. These are increasing jumps for concave \(\Hc\) and decreasing jumps for convex \(\Hc\). Shocks of the PDE \eqref{eq:hcl} have a well-defined velocity, described by the Rankine-Hugoniot formula
\begin{equation}
 V=\frac{\Hc(\la)-\Hc(\vr)}{\la-\vr}\label{eq:rhv}
\end{equation}
for the case of densities \(\la\) and \(\vr\) on the two sides of the shock.

The subject of this paper is the microscopic structure of shocks, and the way we handle this is via inserting \emph{second class particles} as markers for the shock positions. These are objects arising from \emph{basic coupling} of the models. We briefly summarize this classical object for zero range processes. We consider two realisations \(\un\om(t)\) and \(\un\ze(t)\) of the same zero range dynamics with initial states that satisfy \(\om_i(0)\le\ze_i(0)\) for each \(i\in\Zb\). The formal infinitesimal generator for the joint evolution of this pair makes sure that
\begin{itemize}
 \item the order \(\om_i(t)\le\ze_i(t)\) is a.s.\ kept for any time \(t\ge0\),
 \item marginally \(\un\om(t)\) follows the zero range dynamics,
 \item marginally \(\un\ze(t)\) also follows the zero range dynamics.
\end{itemize}
This generator on suitable functions \(\vp\) of the pair \((\un\om,\,\un\ze)\) is given by
\begin{equation}
 \begin{aligned}
  (L\vp)(\un\om,\,\un\ze)=\sum_{i=-\infty}^\infty\Bigl\{&pg(\om_i)\cdot\bigl[\vp(\un\om^{i,i+1},\,\un\ze^{i,i+1})-\vp(\un\om,\,\un\ze)\bigr]\\
   +\bigl[&pg(\ze_i)-pg(\om_i)\bigr]\cdot\bigl[\vp(\un\om,\,\un\ze^{i,i+1})-\vp(\un\om,\,\un\ze)\bigr]\\
   +\phantom{\bigl[}&(1-p)g(\om_i)\cdot\bigl[\vp(\un\om^{i,i-1},\,\un\ze^{i,i-1})-\vp(\un\om,\,\un\ze)\bigr]\\
   +\bigl[&(1-p)g(\ze_i)-(1-p)g(\om_i)\bigr]\cdot\bigl[\vp(\un\om,\,\un\ze^{i,i-1})-\vp(\un\om,\,\un\ze)\bigr]\Bigr\}.
 \end{aligned}\label{eq:cgen}
\end{equation}
The rates in this generator are non-negative due to \(g\) being non-decreasing. We say that there are \(\ze_i(t)-\om_i(t)\) many second class particles at site \(i\) at time \(t\). It is easy to see that the coupled dynamics conserves the number of second class particles.

\section{Random walking shocks}\label{sc:rws}

We follow the ideas and methods in \cite{rwshscp}; for completeness, we repeat the definitions in our context. For comparison, the rates from \cite{rwshscp} now read
\begin{equation}
 \pf(y,\,z)=pg(y)\quad\text{and}\quad\qf(y,\,z)=(1-p)g(z).\label{eq:pqdef}
\end{equation}
We start with a single shock, where we only have one second class particle in the system. The shock measure we consider in this case is a product (for sites) probability distribution on the pair \((\un\om,\,\un\ze)\):
\[
 \un\nu_j:\,=\bigotimes_{i<j}\nu^\te\bigotimes_{i=j}\hat\nu\bigotimes_{i>j}\nu^\si,
\]
where the indices represent sites of \(\Zb\), \(\nu^\te\) is the stationary measure \eqref{eq:mum} concentrated on the diagonal of the single-site coupling state space \(I\times I\):
\[
 \nu^\te(y,\,z)=\left\{
  \begin{aligned}
   &\mu^\te(y),&&\text{if }y=z,\\
   &0,&&\text{if }y\ne z,
  \end{aligned}
 \right.
\]
\(\hat\nu\) is an arbitrary probability measure on \(I\times I\) with exactly one second class particle:
\[
 \hat\nu(y,\,z)=\left\{
  \begin{aligned}
   &\hat\mu(y),&&\text{if }z=y+1,\\
   &0,&&\text{otherwise,}
  \end{aligned}
 \right.
\]
and \(\te\),\,\(\si\) are the parameters of the shock measure on the left- and right-hand sides, respectively. This distribution describes a two-sided stationary distribution for sites \(i\ne j\) with constant densities and no second class particles, and one second class particle in a yet unknown distribution \(\hat\mu\) for \(\om_j=\ze_j-1\) at site \(j\).

We say that this distribution satisfies the \emph{random walking shock property}, if its evolution according to the coupled dynamics \eqref{eq:cgen} obeys
\begin{equation}
 \frac{\di}{\di t}\un\nu_jS(t)\Bigr|_{t=0}=P\cdot[\un\nu_{j+1}-\un\nu_j]+Q\cdot[\un\nu_{j-1}-\un\nu_j]\label{eq:main}
\end{equation}
with non-negative numbers \(P\) and \(Q\) that can depend on parameters \(\te\) and \(\si\). This says that \(\un\nu_j\) evolves into a linear combination of translations of this same measure, and the coefficients in this linear combination agree with the transition probabilities of a simple random walk of right jump rate \(P\) and left jump rate \(Q\).

Here is our first result of a systematic search within the class of zero range processes above for random walking shocks.
\begin{tm}\label{tm:single}
 The random walking shock property holds for zero range processes as described above in exactly the following cases.
 \begin{enumerate}
  \item\label{it:iw} \(I=\Zb^+\), \(g(y)=\ac y\), \(\ac>0\), \(\te=\si\), \(\hat\mu=\mu^\te=\mu^\si\); then \(P=p\ac\) and \(Q=(1-p)\ac\).
  \item\label{it:qzrp} \(I=\Zb^+\), \(g(y)=\ac\cdot\bigl(1-\e{-\be y}\bigr)\), \(p=1\), \(\be,\ac>0\) and \(\te+\be=\si<\ln\ac\), \(\hat\mu=\mu^\si\); then \(P=\bigl(1-\e{-\be}\bigr)\cdot\bigl(\ac-\e\si\bigr)\) and \(Q=0\).
  \item\label{it:ezrp} \(I=\Zb^+\), \(g(y)=\ac\cdot\bigl(\e{\be y}-1\bigr)\), \(p=1\), \(\be,\ac>0\) and \(\si=\te-\be\), \(\hat\mu=\mu^\si\); then \(P=\bigl(\e\be-1\bigr)\cdot\bigl(\e\si+\ac\bigr)\) and \(Q=0\).
  \item\label{it:egzrp} \(I=\Zb\), \(g(y)=\ac\cdot\e{\be y}+\bc\), \(p=1\), \(\be,\ac>0\), \(\bc\ge0\) and \(\ln\bc<\si=\te-\be\), \(\hat\mu=\mu^\si\); then \(P=\bigl(\e\be-1\bigr)\cdot\bigl(\e\si-\bc\bigr)\) and \(Q=0\).
 \end{enumerate}
\end{tm}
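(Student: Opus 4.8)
The plan is to compute the signed measure $\un\nu_j L$, with $L$ the coupled generator \eqref{eq:cgen}, and to compare it with the right-hand side of \eqref{eq:main}. The first step is a localization. Since $\un\mu^\te$ and $\un\mu^\si$ are stationary for the uncoupled zero range dynamics, and the second-class-particle terms in \eqref{eq:cgen} vanish on the diagonal $\om_i=\ze_i$, testing \eqref{eq:main} against any cylinder function supported on sites $\le j-2$ or on sites $\ge j+2$ yields $0=0$ automatically. Hence \eqref{eq:main} is equivalent to a countable family of scalar balance equations, obtained by testing against the indicators of the local occupation numbers $(\om_{j-1},\om_j,\om_{j+1})$ at and around the shock (the generator reaches the sites $j\pm2$, but these enter only as fixed-density reservoirs). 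This is exactly the reduction of \cite{rwshscp}, with the zero range rates $\pf(y,z)=pg(y)$, $\qf(y,z)=(1-p)g(z)$ of \eqref{eq:pqdef} inserted, so I would quote it and work with the resulting system.

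The second step extracts and solves the equations. They split into identities not containing $P,Q$ --- constraints on $g$ together with $(\te,\si,\hat\mu)$ --- and identities that subsequently determine $P$ and $Q$. Using the explicit product form \eqref{eq:mum}, in particular $g(z+1)\mu^\te(z+1)=\e\te\mu^\te(z)$ and its $\si$-analogue, the infinitely many constraints should collapse to: a matching condition forcing $\hat\mu=\mu^\si$ (and $\te=\si$, $\hat\mu=\mu^\te=\mu^\si$ when $g$ is linear), a Rankine--Hugoniot-type relation between $\te$ and $\si$ (the analogue of the special shock-density condition of Derrida et al.\ \cite{dls} and Belitsky--Sch\"utz \cite{qse}), and a first-order recursion $g(n+1)=\la g(n)+c$ for constants $\la>0$, $c$. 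On $I=\Zb^+$ the boundary value $g(0)=0$ then leaves only $g(n)=cn$ ($\la=1$, item \ref{it:iw}), $g(n)=\frac{c}{\la-1}(\la^n-1)$ with $\la=\e\be>1$ (item \ref{it:ezrp}), and the same with $\la=\e{-\be}<1$, i.e.\ $g(n)=\frac{c}{1-\e{-\be}}(1-\e{-\be n})$ (item \ref{it:qzrp}); on $I=\Zb$ there is no boundary condition and one gets the two-parameter family $g(n)=\ac\e{\be n}+\bc$ of item \ref{it:egzrp}, with $\bc\ge0$ forced by $g\ge0$ as $n\to-\infty$. The displayed formulas for $P$, $Q$ and the parameter ranges (from $\un\te<\te<\bar\te$, the $\te$--$\si$ relation, and $P,Q\ge0$) come out of the second group of identities.

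A separate thread handles the asymmetry. For a genuinely nonlinear $g$, the coefficient-$p$ (right-jump) and coefficient-$(1-p)$ (left-jump) parts of the system impose incompatible functional equations on $g$ unless one of them is switched off, which forces $p=1$ (hence $Q=0$) in items \ref{it:qzrp}--\ref{it:egzrp}; only the linear rate $g(y)=\ac y$ survives for $0<p<1$, and then both parts hold and give $P=p\ac$, $Q=(1-p)\ac$. Conversely, once $g$, $p$, $\te$, $\si$, $\hat\mu$ are as listed, \eqref{eq:main} is checked directly: relating $\un\nu_j L$ to the translates $\un\nu_{j\pm1}$ becomes an explicit cancellation using the telescoping identities for $g(z)!$, and the stated $P$, $Q$ are nonnegative precisely on the indicated ranges.

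I expect the main obstacle to be the completeness of the list: that no other $(g,p,\te,\si,\hat\mu)$ works. This needs (a) isolating from the infinite system a finite subfamily already forcing $g(n+1)=\la g(n)+c$; (b) verifying that the remaining equations are then automatic, which is where the precise form of $\mu^\te$ in \eqref{eq:mum} is essential; and (c) care with the degenerate cases --- the linear limit $\la=1$, the value $\bc=0$, and the passage between the $I=\Zb^+$ and $I=\Zb$ settings --- together with the positivity constraints restricting the parameters to the stated intervals. Since the method is that of \cite{rwshscp}, much of (b) can be imported; the genuinely new point is item \ref{it:qzrp}, where $g(y)=\ac(1-\e{-\be y})$ is bounded, so one should additionally confirm convergence of all the series and that $\si<\ln\ac$ keeps $\mu^\si$ well defined.
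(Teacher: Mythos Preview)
Your proposal is correct and follows essentially the same route as the paper: both reduce \eqref{eq:main} via the localisation of \cite{rwshscp} to the requirement that the quantities $B+D$, $C+E$ and $A$ of \cite[(49),(50),(52)]{rwshscp} be constant, and both find that (with $\hat\mu=\mu^\si$ forced by the $C+E$ equation) this yields the recursion $g(y+1)-g(y)=\text{const}\cdot\e{(\te-\si)y}$, equivalently your $g(n+1)=\la g(n)+c$, whose solutions are precisely the four listed families. One small remark: what you call a separate ``Rankine--Hugoniot-type relation between $\te$ and $\si$'' is not an independent constraint in the paper's derivation --- the exponent $\te-\si$ appears directly as the parameter of the recursion, so the relation $\si=\te\pm\be$ is read off from the form of $g$ rather than imposed separately.
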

We now comment on these cases.
\begin{enumerate}
 \item This is the case of independent walkers. A second class particle in this model is yet another independent particle in the system with no interactions whatsoever, and it indeed sees a flat \(\mu^\te=\hat\mu=\mu^\si\) scenario stationary. No shocks occur in this description, \(P\) and \(Q\) are the constant rates of a single walker.
 \item This is q-TAZRP, its rate can be rewritten as (a constant multiple of) \(g(y)=1-q^y\), and the shock condition is \(\te-\si=\ln q<0\). The jump rate is concave, hence so is the hydrodynamic flux \cite{convex}. A shock therefore jumps upwards in density as we cross it from left to right.
 \item This is the convex counterpart of q-TAZRP, only differs from the next case in having a lower bound on particle occupations. Shocks are negative jumps, accordingly.
 \item This doubly infinite generalised exponential zero range has been known to have random walking shocks since \cite{rwshscp}, in fact its bricklayers counterpart, without the second class particles, was already investigated in \cite{sokvalak}. A slight novelty here is the constant \(\bc\) which has not been noted before in this context. Indeed, without the constant \(\bc\) the marginals have the remarkable property that \(\mu^\te\) is a shift by one of \(\mu^\si\). This property is lost with the introduction of \(\bc\ne0\) (it is obviously false in the above cases when the marginals are both concentrated on \(I=\Zb^+\)). Nevertheless, the random walking shocks structure survives this addition of \(\bc\) in the rates.
\end{enumerate}

Next we discuss the case of multiple shocks, each with a second class particle. The shock measure now takes the form
\begin{equation}
 \un\nu^{\un\si,\un m}:\,=\bigotimes_{i\in\Zb}\nu^{\si_i,m_i},\label{eq:unnudef}
\end{equation}
where the vector \(\un m\) is of non-negative integer components \(m_i\), and
\[
 \nu^{\si,m}(y,\,z)=\left\{
  \begin{aligned}
   &\hat\mu^{\si,m}(y),&&\text{if }z=y+m,\\
   &0,&&\text{if }z\ne y+m
  \end{aligned}
 \right.
\]
is a probability distribution on the one-site coupling space \(I\times I\). It forces exactly \(m_i\) second class particles at site \(i\), and features a yet unknown distribution \(\hat\mu^{\si_i,m_i}\) for \(\om_i=\ze_i-m_i\). We require that \(\mu^{\si,0}=\mu^\si\); the distribution on a site without second class particles agrees with \eqref{eq:mum} of some parameter \(\un\te<\si_i<\bar\te\). 

We say that \(\un\nu^{\un\si,\un m}\) satisfies the \emph{random walking shocks property} if, according to \eqref{eq:cgen}, its time evolution satisfies
\[
 \begin{aligned}
  \frac{\di}{\di t}\un\nu^{\un\si,\un m}S(t)\Bigr|_{t=0}=\sum_{i\in\Zb}&P(m_i,\,m_{i+1},\,\si_i,\,\si_{i+1})\cdot\bigl[\un\nu^{\un\si^{i,i+1},\un m^{i,i+1}}-\un\nu^{\un\si,\un m}\bigr]\\
  +\,&Q(m_i,\,m_{i+1},\,\si_i,\,\si_{i+1})\cdot\bigl[\un\nu^{\un\si^{i+1,i},\un m^{i+1,i}}-\un\nu^{\un\si,\un m}\bigr]
 \end{aligned}
\]
 with positive parameters \(P\) and \(Q\) that now can depend on the number \(m_i\) and \(m_{i+1}\) of second class particles on the sites affected by the change, as well as the parameters \(\si_i\) and \(\si_{i+1}\) on these sites. In \(\un m^{i,i+1}\), the number of second class particles is changed analogously to \eqref{eq:ijch}, while \(\un\si^{i+1,i}\) is left to be defined in the theorem below.

We examine this phenomenon for cases \ref{it:qzrp}, \ref{it:ezrp} and \ref{it:egzrp} from above.
\begin{tm}\label{tm:multi}
 The random walking shocks property holds in each of the cases \ref{it:qzrp}, \ref{it:ezrp} and \ref{it:egzrp} of Theorem \ref{tm:single} if
 \begin{equation}
  \si_{i-1}-\si_i=\left\{
   \begin{aligned}
    &-\be m_i&&\text{(case \ref{it:qzrp}),}\\
    &\be m_i&&\text{(cases \ref{it:ezrp}, \ref{it:egzrp}),}\\
   \end{aligned}
  \right.\qquad\text{and}\qquad\hat\mu^{\si,m}=\mu^\si.\label{eq:mstep}
 \end{equation}
 Then,
 \[
  \si_j^{i,i+1}=\left\{
   \begin{aligned}
    &\si_j,&&\text{for }j\ne i,\\
    &\si_i-\beta,&&\text{for }j=i\text{ (case \ref{it:qzrp}),}\\
    &\si_i+\beta,&&\text{for }j=i\text{ (cases \ref{it:ezrp}, \ref{it:egzrp}),}
   \end{aligned}
  \right.
 \]
 \[
  Q(m_i,\,m_{i+1},\,\si_i,\,\si_{i+1})=0,\qquad P(m_i,\,m_{i+1},\,\si_i,\,\si_{i+1})=\left\{
   \begin{aligned}
    &\bigl(1-\e{-\be m_i}\bigr)\cdot\bigl(\ac-\e{\si_i}\bigr),&&\text{case \ref{it:qzrp},}\\
    &\bigl(\e{\be m_i}-1\bigr)\cdot\bigl(\e{\si_i}+\ac\bigr),&&\text{case \ref{it:ezrp},}\\
    &\bigl(\e{\be m_i}-1\bigr)\cdot\bigl(\e{\si_i}-\bc\bigr),&&\text{case \ref{it:egzrp}.}\\
   \end{aligned}
  \right.
 \]
Notice that all \(\si_i\)'s must be bounded from above by \(\ln\ac\) in case \ref{it:qzrp}, and from below by \(\ln\bc\) in case \ref{it:egzrp} for the distributions to make sense.
\end{tm}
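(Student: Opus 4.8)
The plan is to follow, step for step, the computation behind Theorem \ref{tm:single} -- of which the present statement is the extension to arbitrarily many second class particles -- leaning on \cite{rwshscp} for the routine parts. As there, the asserted identity for $\frac{\di}{\di t}\un\nu^{\un\si,\un m}S(t)\big|_{t=0}$ must be read weakly, against cylinder test functions $\vp$: one verifies that $\int L\vp\,\di\un\nu^{\un\si,\un m}$ equals the right--hand side of the theorem with every measure replaced by its integral against $\vp$. Both sides are then genuinely \emph{finite} sums -- on the left only bonds meeting the support of $\vp$ survive; on the right $P$ vanishes whenever $m_i=0$ and $Q$ whenever $m_{i+1}=0$, and among the remaining bonds only those meeting the support of $\vp$ contribute -- so everything reduces to a finite, bond--by--bond calculation. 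Throughout, $\mp$ is read as ``$-$ in case \ref{it:qzrp}, $+$ in cases \ref{it:ezrp} and \ref{it:egzrp}'' (and $Q\equiv0$).

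Two elementary facts about the one--site marginals \eqref{eq:mum} drive the computation. First, directly from the definition, $g(z)\,\mu^\si(z)=\e\si\,\mu^\si(z-1)$; summing this (the boundary term vanishing because $g(0)=0$ on $I=\Zb^+$, and by telescoping on $I=\Zb$) gives the current identity $\Ev^\si g(\om)=\e\si$. Second, in each of the three rate families the rate is a constant plus a multiple of $\e{\mp\be\om}$, so $g(\om+m)-g(\om)$ is an $\om$--independent multiple of $\e{\mp\be\om}$, while $\e{\mp\be\om}$ is itself an affine function of $g(\om)$; hence, taking $\Ev^\si$ and using $\Ev^\si g=\e\si$, the quantity $\ac\,\Ev^\si\e{\mp\be\om}=\ac\,Z(\si\mp\be)/Z(\si)$ evaluates to precisely the second factor of the claimed $P$. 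This is the partition--function identity underlying the formula for $P$; for case \ref{it:egzrp}, where $g$ is not a $q$--series, passing through $\Ev^\si g=\e\si$ is the cleanest route to it.

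Next I would localise the left--hand side. A bond $(i,i+1)$ lying strictly inside a region where $\un m$ and $\un\si$ are locally constant carries no second class particle, so ($p=1$, $g(\ze_i)-g(\om_i)=0$) only its first class term appears; by stationarity of $\bigotimes\mu^\si$ for the full--line zero range process these bulk first class terms produce, against any cylinder $\vp$, only discrete divergences that telescope and get absorbed into the finitely many shock bonds -- those with $m_i>0$ or $m_{i+1}>0$. For such a shock bond I would use the recursion to remove $\mu^{\si_i}(\om_i+1)$ and $\mu^{\si_{i+1}}(\om_{i+1}-1)$, after which the coefficient of each ``shape'' of $\vp$ on sites $i,i+1$ becomes affine in $g(\om_i)$, $g(\om_i+m_i)$ (and in nearby $g(\om_{i\pm1})$ coming from the neighbouring first class terms), with scalars built from $\e{\si_{i-1}-\si_i}=\e{\mp\be m_i}$ and $\e{\si_i-\si_{i+1}}=\e{\mp\be m_{i+1}}$ via the step relation \eqref{eq:mstep}. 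The decisive point -- and the reason no zero range rate outside the three families survives the search -- is that precisely for constant--plus--exponential rates the $g(\om)$--dependent parts cancel in pairs, leaving $\om$--independent numbers; matching these against the right--hand side, together with the partition--function identity above, simultaneously forces $P$ to the stated value and identifies the distribution produced at site $i$ by a second class jump across $(i,i+1)$ as $\mu^{\si_i\mp\be}$, i.e.\ $\un\nu^{\un\si^{i,i+1},\un m^{i,i+1}}$ -- this last being nothing but the size biasing of $\mu^{\si_i}$ by the jump rate $g(\om_i+m_i)-g(\om_i)\propto\e{\mp\be\om_i}$, which shifts the parameter by a fixed $\mp\be$ regardless of the size of $m_i$ (this is what makes \eqref{eq:mstep} work for any $\un m$). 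One also checks that $(\un\si^{i,i+1},\un m^{i,i+1})$ again obeys \eqref{eq:mstep}, and that the constraints $\si_i<\ln\ac$ (case \ref{it:qzrp}) and $\si_i>\ln\bc$ (case \ref{it:egzrp}), needed for the measures to be well defined and for $P\ge0$, persist because the move only lowers, respectively raises, a single $\si_i$.

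I expect the main difficulty to be bookkeeping rather than conceptual: making the telescoping of the bulk first class currents fully rigorous when several shocks cluster (so that one bond can border two shock sites at once), and dealing with the degenerate cases where two of the shifted profiles $\un m^{i,i+1}$ coincide; both are handled exactly as in \cite{rwshscp} and bring in nothing essentially new. Cases \ref{it:ezrp} and \ref{it:egzrp} are run in parallel with case \ref{it:qzrp}, the only changes being the orientation of the $\be$--shift in \eqref{eq:mstep} and the second factor of $P$.
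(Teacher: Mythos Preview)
Your plan is essentially the paper's own proof. Both arguments work inside the framework of \cite{rwshscp}: one checks that the ``$B_i+D_i$'' terms vanish (hence $Q=0$), computes the ``$C_i+E_i$'' terms via the partition-function ratio $Z(\si_i\mp\be)/Z(\si_i)=\mp(\e{\si_i}\pm\bc)/\ac$ (obtained, just as you say, from $\Ev^{\si_i}g(\om)=\e{\si_i}$), and then verifies that the ``$A_i$'' first-class terms telescope down to boundary contributions whose mean cancels $\sum_i(C_i+E_i)$. The unified $\mp$ bookkeeping for the three cases is also the same. Your size-biasing remark---that the second-class jump weight $g(\om_i+m_i)-g(\om_i)\propto\e{\mp\be\om_i}$ tilts $\mu^{\si_i}$ to $\mu^{\si_i\mp\be}$ independently of $m_i$---is a nice conceptual gloss that the paper does not spell out, but the underlying computation is identical.
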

The interpretation is again that the product distribution \eqref{eq:unnudef} evolves into a linear combination of similar product measures with changed \(\un\si\) and \(\un m\) parameters, and the coefficients can be given an interacting random walks interpretation. To see this, notice that the steps \(\un m^{i,i+1}\) and \(\un\si^{i,i+1}\) keep \eqref{eq:mstep} for all times. The walkers are the second class particles themselves, jumping from \(i\) to \(i+1\) with rate \(P(m_i,\,m_{i+1},\,\si_i,\,\si_{i+1})\). This is clearly a generalization of the random walking shock property \eqref{eq:main} for a single shock.

The rates \(P\) give insights on the interaction between shocks. Notice first that the form we obtained for \(P\) is rather similar to the rates \(g\) of the model itself. This is something one has seen in \cite{rwshscp} to some extent in both ASEP and EBLP, and it repeats here again. Multiple walkers are allowed to occupy the same site. In this case, we see stronger interactions than with EBLP before: except for \(\bc=0\) in case \ref{it:egzrp}, the rate \(P\) of jump from a site \(i\) will not coincide with the sum of the rates of individual shock-walkers. This is due to the convexity of the exponential function.

To see that shock-walkers stay in tight distance to each other, we can repeat the argument for Lemma 4.2 of \cite{sokvalak}. Namely, first label the walkers in increasing order from left to right (and run the label dynamics to keep this order). Then notice that two neighbouring walkers, if both are alone on their sites \(i<j\), have higher rate to decrease their distance then to increase it. Adding higher-labeled walkers on site \(j\) blocks the front walker due to the labeling dynamics, while adding lower-labeled ones on site \(i\) makes \(m_i\), hence the rate of decreasing the distance, even higher. Thus, each inter-walker distance is stochastically bounded by independent Geometric distributions.

Thinking of hydrodynamics, it is natural to expect that our shock-walkers have the Rankine-Hugoniot velocity \eqref{eq:rhv} as their mean velocity. We verify this directly in certain cases:
\begin{pr}\label{tm:rh}
 The Rankine-Hugoniot formula \eqref{eq:rhv} holds for the velocity in single shock cases of Theorem \ref{tm:single}, and the two-shock cases of Theorem \ref{tm:multi}.
\end{pr}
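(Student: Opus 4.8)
The plan is to reduce the whole statement to one scalar identity and then recover the two-shock cases by applying it to each of the two jumps. For the velocity on the left of \eqref{eq:rhv}: in the single-shock cases the second class particle is, by \eqref{eq:main}, a random walk with right rate \(P\) and left rate \(Q\), hence its velocity is \(P-Q\), equal to \(P\) throughout cases \ref{it:qzrp}--\ref{it:egzrp} since there \(Q=0\). For the two-shock cases of Theorem \ref{tm:multi} — read as two second class particles on distinct sites, i.e.\ two entries of \(\un m\) equal to \(1\) — the constraint \eqref{eq:mstep} with \(m_i=1\) forces the \(\si\)-parameter to jump by a single \(\pm\be\) step across each particle, so the three density plateaus carry parameters \(\te\), \(\te-B\), \(\te-2B\) (with \(-B=+\be\) in case \ref{it:qzrp} and \(-B=-\be\) in cases \ref{it:ezrp},\ref{it:egzrp}), and the parameter sitting on a shock site equals the density parameter immediately to its right; each particle then jumps right with the rate \(P(1,0,\si_i,\cdot)\) of Theorem \ref{tm:multi} and never left. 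So in every case the velocity to be matched is the single-shock \(P\) evaluated for a one-\(\be\)-step jump, and it suffices to check, in the single-shock setting, that \(P\) equals the right-hand side of \eqref{eq:rhv} for the densities \(\vr(\te),\vr(\si)\) read off the shock measure. Case \ref{it:iw} is degenerate (\(\te=\si\), Poisson marginals, \(\Hc(\vr)=(2p-1)\ac\vr\) linear): \eqref{eq:rhv} then equals \((2p-1)\ac=P-Q\) for \emph{any} two densities, in particular in the coinciding limit.

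For the flux I would first note the identity \(\Ev^\te g(\om)=\e{\te}\), valid for every admissible \(g\) and for \(I=\Zb^+\) as well as \(I=\Zb\): since \(g(z)/g(z)!=1/g(z-1)!\) for all \(z\) (the \(z=0\) term dropping when \(I=\Zb^+\) because \(g(0)=0\)), the series \(\sum_z g(z)\e{\te z}/g(z)!\) telescopes to \(\e{\te}\sum_z\e{\te z}/g(z)!=\e{\te}Z(\te)\) after shifting the index, which is legitimate in the convergence range. With \(p=1\) this gives \(\Hc(\vr(\te))=\e{\te}\) in cases \ref{it:qzrp}--\ref{it:egzrp}; writing the three rates uniformly as \(g(y)=A\,\e{By}+C\) with \((A,B,C)\) equal to \((-\ac,-\be,\ac)\), \((\ac,\be,-\ac)\), \((\ac,\be,\bc)\) and using \(\te=\si+B\), one gets \(\Hc(\vr(\si))-\Hc(\vr(\te))=\e{\si}-\e{\te}=-\e{\si}(\e{B}-1)\).

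The one genuinely computational ingredient is the density increment \(\vr(\si)-\vr(\te)\) at \(\si=\te-B\); the point is that, although \(\vr(\te)\) itself has no elementary closed form for these rates (it is a \(q\)-series in cases \ref{it:qzrp},\ref{it:ezrp}), the \emph{increment} does, and it can be read off the single-site balance relation \(g(z)\,\mu^\te(z)=\e{\te}\,\mu^\te(z-1)\) behind stationarity of \eqref{eq:mum}, with no need to evaluate \(\vr(\te)\). Multiplying this relation by \(\e{-Bz}\) and by \(z\,\e{-Bz}\), summing over \(z\), and using the index shift together with \(\e{\te-B}=\e{\si}\), one obtains
\[
 \Ev^\te\e{-B\om}=\frac{A}{\e{\si}-C},\qquad \Ev^\te\bigl[\om\,\e{-B\om}\bigr]=\frac{A\,\vr(\te)}{\e{\si}-C}-\frac{A\,\e{\si}}{(\e{\si}-C)^2}.
\]
Since \(\mu^\si(z)=\bigl(Z(\te)/Z(\si)\bigr)\e{-Bz}\,\mu^\te(z)\), we have \(\vr(\si)=\Ev^\te[\om\,\e{-B\om}]/\Ev^\te\e{-B\om}\), hence \(\vr(\si)-\vr(\te)=-\e{\si}/(\e{\si}-C)\), and \eqref{eq:rhv} evaluates to
\[
 \frac{\Hc(\vr(\si))-\Hc(\vr(\te))}{\vr(\si)-\vr(\te)}=\frac{-\e{\si}(\e{B}-1)}{-\e{\si}/(\e{\si}-C)}=(\e{B}-1)(\e{\si}-C),
\]
which is precisely the \(P\) of Theorem \ref{tm:single} in cases \ref{it:qzrp}, \ref{it:ezrp}, \ref{it:egzrp}. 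For the two-shock configurations, applying the same identity to each of the two one-\(\be\)-step jumps, with \(\si_i\) playing the role of the right-hand density parameter of that jump, shows that each second class particle travels at the Rankine--Hugoniot velocity of its own discontinuity.

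I do not expect a real obstacle here. The points that need care are the convergence and reindexing of the sums, licit exactly in the parameter windows the theorems specify (e.g.\ \(\si<\ln\ac\) in case \ref{it:qzrp}, \(\si>\ln\bc\) in case \ref{it:egzrp}, which is also precisely where \(\Ev^\te\e{-B\om}\) is positive and finite), and the \(\si\)-bookkeeping of \eqref{eq:mstep} so that a shock site genuinely carries the density parameter immediately to its right. If anything, the mildly subtle point is conceptual rather than technical: realizing that \(\vr(\si)-\vr(\te)\) closes up from the balance relation alone, so one never has to touch the transcendental density function \(\vr(\cdot)\).
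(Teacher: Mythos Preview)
Your single-shock computation is correct and matches the paper's approach closely: both use \(\Ev^\te g(\om)=\e\te\) for the flux and extract the density increment \(\vr(\si)-\vr(\te)=-\e\si/(\e\si-C)\) from the same telescoping identity, yours phrased via the balance relation \(g(z)\mu^\te(z)=\e\te\mu^\te(z-1)\), the paper's via a direct index shift in the series for \(\vr(\te)\).

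The two-shock part, however, misreads the claim. You show that each walker, \emph{while alone on its site}, jumps at the Rankine--Hugoniot rate for its own one-step discontinuity; but this is just a restatement of the single-shock result and ignores the interaction. The two walkers have different individual rates (the trailing one faster), so they collide; when they share a site the system has a single \(m=2\) rate, not the sum of two \(m=1\) rates. The proposition, as the paper intends it, asserts that the \emph{long-run} velocity of the pair---the center of mass, averaged over the stationary distribution of the inter-walker distance---equals the Rankine--Hugoniot velocity for the \emph{overall} jump from \(\vr(\te)\) to \(\vr(\te\mp2\be)\). The paper proves this by modelling the distance as a birth--death chain with rates \(\rnup,\rup,\rdn\), computing its stationary law, and checking that the resulting center-of-mass speed \(C=\rnup\rdn/(\rdn+\rnup-\rup)\) coincides with \(V=(\e{\te\mp2\be}-\e\te)/(\vr(\te\mp2\be)-\vr(\te))\). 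That your reading would extend effortlessly to any number of shocks is itself a red flag: the paper explicitly remarks that it sees no easy way to verify the formula beyond two shocks, precisely because the interaction makes the averaging nontrivial.
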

Due to the more complicated interaction of shocks, we do not see an easy way to verify this for more than two shocks, as has been done for the examples of \cite{rwshscp}.

There is a well established connection between exclusion and zero range processes that works via mapping inter-particle distances of exclusion into zero range models. The classical ASEP maps to constant rate asymmetric zero range, and q-TASEP maps to q-TAZRP this way. Notice that random walking shocks in the form discussed here are not known to appear in constant rate zero range, nor in q-TASEP. The mapping of course gives something, but this seems rather unnatural as the second class particles of the respective models do not get maped into each other either. The random walking shocks of ASEP, with \cite{rwshscp} or without \cite{qse} the second class particle, give a strange statement for constant rate zero range that the authors can best imagine by simply undoing the mapping back to ASEP, which does not do much good to the state of the art. The random walking shocks discovered here for q-TAZRP also mean something in q-TASEP, but this also seems to be best imagined by undoing the mapping back to q-TAZRP. In fact, it is via the mapping that already flat, constant density stationary distributions of q-TASEP are best understood.

\section{Proofs}\label{sc:pf}

We build on \cite{rwshscp}, and cite various steps and formulas from there across our proofs.
\begin{proof}[Proof of Theorem \ref{tm:single}]
 We plug \eqref{eq:pqdef} in (49) and (50) of \cite{rwshscp}, with marginals \eqref{eq:mum} but no assumption on \(\hat\mu\). Also, we rename \(x:\,=\om_{-1}\), \(y:\,=\om_0\), \(z:\,=\om_1\) from there. Then checking the existence of random walking shocks amounts to ensuring that
 \[
  \begin{aligned}
   B+D&=(1-p)\bigl[g(y+1)-g(y)]\cdot\frac{\mu^\te(x)\hat\mu(y)}{\hat\mu(x)\mu^\si(y)},\\
   C+E&=p\bigl[g(y+1)-g(y)]\cdot\frac{\hat\mu(y)\mu^\si(z)}{\mu^\te(y)\hat\mu(z)}
  \end{aligned}
 \]
 are each constant as functions of \(y\) and \(x\), resp.\ \(z\). As \(p>0\) in all cases, the second line tells us that \(\hat\mu=\mu^\si\).
 
 If \(p<1\) then this implies in the first line that \(g\) is an affine function, and since it cannot go negative, the range of occupations cannot be doubly infinite, that is \(I=\Zb^+\). Furthermore, this line also tells us that \(\mu^\te=\hat\mu\), which in turn is equal to \(\mu^\si\), and this gives case \ref{it:iw} of the theorem.
 
 If \(p=1\) then the first line of the display is empty, and in the second line
 \[
  \frac{\hat\mu(y)}{\mu^\te(y)}=\frac{\mu^\si(y)}{\mu^\te(y)}=\frac{Z(\te)}{Z(\si)}\cdot\e{(\si-\te)y}
 \]
 which gives
 \[
  g(y+1)-g(y)=\text{const}\cdot\e{(\te-\si)y}.
 \]
 The general solution is
 \begin{equation}
  g(y)=\hat\ac\cdot\e{(\te-\si)y}+\bc,\label{eq:ezrp}
 \end{equation}
 where the only restriction so far is that \(g(0)=0\) if \(I=\Zb^+\) and \(g(y)\ge0\) for all \(y\in I\) must hold. For \(I=\Zb\) this means \(\hat\ac>0\) and \(\bc\ge0\), and attractivity requires \(\te>\si\) as well. Moreover, \(\te>\si>\un\te=\ln\bc\). This is case \ref{it:egzrp}.
 
 When \(I=\Zb^+\), \(\hat\ac>0\) requires again \(\te>\si\), and we need \(\bc=-\hat\ac\) for \(g(0)=0\). This is case \ref{it:ezrp}. However, we can also have \(\hat\ac<0\) with \(\te<\si<\bar\te=\ln\bc\) and \(\bc=-\hat\ac\). This is case \ref{it:qzrp} after a sign change in the constant, \(\ac=-\hat\ac\).
 
 We next check that \(A\) of (52) in \cite{rwshscp} is also independent of \(x\), \(y\) and \(z\). We only indicate how it is done in the linear case \ref{it:iw}, and concentrate on the other cases. For these latter,
 \[
  \begin{aligned}
   A&=g(\om_{b+1})-g(\om_{a-1})\\
   &\quad+g(x+1)\cdot\frac{\e\te\cdot\mu^\si(y-1)}{g(x+1)\mu^\si(y)}-g(y)+g(y+1)\cdot\frac{\mu^\si(y+1)\cdot\e{-\si}\cdot g(z)}{\mu^\si(y)}-g(y+1)+g(y)-g(z)\\
   &=g(\om_{b+1})-g(\om_{a-1})+g(y)\cdot\e{\te-\si}-g(y)+g(z)-g(y+1)+g(y)-g(z)\\
   &=g(\om_{b+1})-g(\om_{a-1})+g(y)\cdot\e{\te-\si}-g(y+1).
  \end{aligned}
 \]
For the linear case \ref{it:iw} the above covers the right jumps part of the dynamics. The exponential vanishes and the increment is constant. The left jump parts work similarly. For all other cases we now use \eqref{eq:ezrp}:
 \begin{equation}
  A=g(\om_{b+1})-g(\om_{a-1})+(\e{\te-\si}-1)\bc.\label{eq:asi}
 \end{equation}
 This being a constant, \eqref{eq:ezrp} with \(p=1\) does give models with random walking shocks. The left and right jump rates of the shock are, respectively (cf.\ \cite{rwshscp})
 \[
  Q=B+D=0,\qquad P=C+E=\hat\ac\bigl(\e{\te-\si}-1\bigr)\cdot\frac{Z(\te)}{Z(\si)}.
 \]
 This latter is further simplified by
 \begin{equation}
  Z(\te)=\sum_{z\in I}\frac{\e{\te z}}{g(z)!}=\sum_{z\in I}\e{(\te-\si)z}\frac{\e{\si z}}{g(z)!}=Z(\si)\cdot\Ev^\si\e{(\te-\si)\om}=Z(\si)\cdot\frac{\Ev^\si g(\om)-\bc}{\hat\ac}=Z(\si)\cdot\frac{\e\si-\bc}{\hat\ac}\label{eq:zsh}
 \end{equation}
 where we last used a general property of expectations within the whole family of misanthrope processes. Thus,
 \begin{equation}
  P=\bigl(\e{\te-\si}-1\bigr)\cdot\bigl(\e\si-\bc\bigr)\label{eq:Pb}
 \end{equation}
which gives the rates as seen in the respective cases of the theorem. Notice that this also coincides with the negative of the expectation of \eqref{eq:asi} w.r.t.\ \(\om_{a-1}\) and \(\om_{b+1}\).
\end{proof}
\begin{proof}[Proof of Proposition \ref{tm:rh}, single shock case]
 We start with the analogue of \eqref{eq:zsh} for expectations. Notice that the function \(\vr(\te)\) can be given in a somewhat explicit expression using q-series or just summing up the below, but the next calculation is sufficient for our purposes. All shock cases of Theorem \ref{tm:single} are captured by \eqref{eq:ezrp}, which we therefore use in the third step.
 \begin{equation}
  \begin{aligned}
   \vr(\te)&=\sum_{z\in I}z\frac{\e{\te z}}{g(z)!Z(\te)}=\frac{\hat\ac}{\e\si-\bc}\sum_{z\in I}z\frac{\e{\si z}}{g(z)!Z(\si)}\cdot\e{(\te-\si)z}\\
   &=\frac1{\e\si-\bc}\sum_{z\in I}z\frac{\e{\si z}}{g(z-1)!Z(\si)}-\frac\bc{\e\si-\bc}\sum_{z\in I}z\frac{\e{\si z}}{g(z)!Z(\si)}\\
   &=\frac{\e\si}{\e\si-\bc}\sum_{y\in I}(y+1)\frac{\e{\si y}}{g(y)!Z(\si)}-\frac\bc{\e\si-\bc}\sum_{z\in I}z\frac{\e{\si z}}{g(z)!Z(\si)}\\
   &=\vr(\si)+\frac{\e\si}{\e\si-\bc}.
  \end{aligned}\label{eq:rhos}
 \end{equation}
 The change \(y=z+1\) of summation variable was possible due to the first term being zero if \(I=\Zb^+\), and summability when \(I=\Zb\). With this result, the Rankine-Hugoniot velocity reads
 \[
  V=\frac{\Hc\bigl(\vr(\si)\bigr)-\Hc\bigl(\vr(\te)\bigr)}{\vr(\si)-\vr(\te)}=-\frac{\e\si-\e\te}{\e\si}\cdot\bigl(\e\si-\bc\bigr)=P
 \]
 from \eqref{eq:Pb}, as required.
\end{proof}
\begin{proof}[Proof of Theorem \ref{tm:multi}]
 We need to check \cite[(54), (55), (56)]{rwshscp} with the assumptions we have. Equation (55) is trivially zero. Checking (56), we'll use \(\mp\) as \(-\) for case \ref{it:qzrp} and \(+\) for cases \ref{it:ezrp} and \ref{it:egzrp}. With this notation we rewrite
 \[
  g(y)=\mp\ac\cdot\e{\mp\be y}\mp\bc
 \]
 which now conforms the statement of Theorem \ref{tm:single}, with \(\bc=-\ac\) for cases \ref{it:qzrp} and \ref{it:ezrp}. We have, from \cite[(56)]{rwshscp},
 \[
  C_i+E_i=\bigl[g(\om_i+m_i)-g(\om_i)\bigr]\cdot\frac{\mu^{\si_i}(\om_i)\mu^{\si_{i+1}}(\om_{i+1})}{\mu^{\si_i\mp\be}(\om_i)\mu^{\si_{i+1}}(\om_{i+1})}=\bigl[g(\om_i+m_i)-g(\om_i)\bigr]\cdot\frac{\e{\pm\be\om_i}Z(\si_i\mp\be)}{Z(\si_i)}.
 \]
 Repeating \eqref{eq:zsh} in this scenario,
 \[
  Z(\si_i\mp\be)=\sum_{z\in I}^\infty\e{\mp\be z}\frac{\e{\si_iz}}{g(z)!}=Z(\si_i)\cdot\Ev^{\si_i}\e{\mp\be\om}=\mp Z(\si_i)\cdot\frac{\Ev^{\si_i}g(\om)\pm\bc}\ac=\mp Z(\si_i)\cdot\frac{\e{\si_i}\pm\bc}\ac,
 \]
 hence
 \[
  C_i+E_i=\ac\e{\mp\be\om_i}\bigl(\e{\mp\be m_i}-1)\e{\pm\be\om_i}\cdot\frac{\e{\si_i}\pm\bc}\ac=\bigl(\e{\mp\be m_i}-1)\cdot\bigl(\e{\si_i}\pm\bc\bigr).
 \]
 This gives the \(P\) rates in the statement of the theorem, as soon as we can see that \(A_i\) of \cite[(54)]{rwshscp} does not depend on \(\om_a,\,\dots,\,\om_b\) and its mean w.r.t.\ \(\om_{a-1}\) and \(\om_{b+1}\) exactly cancels the previous display under the summation of \cite[(53)]{rwshscp}. To this order, we proceed with calculating \cite[(54)]{rwshscp} for our setting.
 \[
  \begin{aligned}
   A_i&=g(\om_i+1)\cdot\frac{\mu^{\si_i}(\om_i+1)\mu^{\si_{i+1}}(\om_{i+1}-1)}{\mu^{\si_i}(\om_i)\mu^{\si_{i+1}}(\om_{i+1})}-g(\om_i)+g(\om_i)-g(\om_i+m_i)\\
   &=\e{\si_i-\si_{i+1}}g(\om_{i+1})-g(\om_i+m_i)\\
   &=\e{\mp\be m_{i+1}}g(\om_{i+1})-g(\om_i+m_i)\\
   &=\mp\ac\e{\mp\be(\om_{i+1}+m_{i+1})}\pm\ac\e{\mp\be(\om_i+m_i)}\mp\e{\mp\be m_{i+1}}\bc\pm\bc.
  \end{aligned}
 \]
 Performing the sum in \cite[(53)]{rwshscp}, the first two terms telescope:
 \[
  \sum_{i=a-1}^bA_i=\mp\ac\e{\mp\be(\om_{b+1}+m_{b+1})}\pm\ac\e{\mp\be(\om_{a-1}+m_{a-1})}\mp\bc\sum_{i=a-1}^b\bigl(\e{\mp\be m_{i+1}}-1\bigr).
 \]
At this point we see that indeed the sum does not depend on \(\om_a,\,\dots,\,\om_b\), as required. By assumption, see \cite{rwshscp}, all second class particles are in the volume \(a\dots b\) of dependence for the test function \(\vp\). Hence \(m_{a-1}=m_{b+1}=0\). By \(\Ev^\te g(\om)=\e\te\), we can calculate the mean
 \[
  \Ev\sum_{i=a-1}^bA_i=\Ev g(\om_{b+1})\pm\bc-\Ev g(\om_{a-1})\mp\bc\mp\bc\sum_{i=a-1}^b\bigl(\e{\mp\be m_{i+1}}-1\bigr)=\e{\si_{b+1}}-\e{\si_{a-1}}\mp\bc\sum_{i=a-1}^b\bigl(\e{\mp\be m_{i+1}}-1\bigr).
 \]
All that is left is to check that \(\sum_{i=a-1}^b(C_i+E_i)\) cancels with this. Notice that \(\si_i\mp\be m_i=\si_{i-1}\). Thus parts of \(C_i+E_i\) rewrite into a telescopic expression:
 \[
  \sum_{i=a-1}^b(C_i+E_i)=\sum_{i=a-1}^b\Bigl(\e{\si_{i-1}}-\e{\si_i}\pm\bc\bigl(\e{\mp\be m_i}-1\bigr)\Bigr)=\e{\si_{a-2}}-\e{\si_b}\pm\bc\sum_{i=a-1}^b\bigl(\e{\mp\be m_i}-1\bigr).
 \]
 Adding the two previous displays results in
 \[
  \e{\si_{b+1}}-\e{\si_b}+\e{\si_{a-2}}-\e{\si_{a-1}}\pm\bc\bigl(\e{\mp\be m_{a-1}}-\e{\mp\be m_{b+1}}\bigr).
 \]
But this is zero due to \(m_{a-1}=m_{b+1}=0\) and \eqref{eq:mstep}.
\end{proof}
\begin{proof}[Proof of Proposition \ref{tm:rh}, two shocks]
 The distance between the two walkers is a birth and death chain with rates, to be specified later,
 \[
  \rnup\text{ for }0\to1,\qquad \rup\text{ for }d\to d+1\quad(d>0),\qquad \rdn\text{ for }d\to d-1\quad(d>0).
 \]
 Assuming \(\rup<\rdn\), the stationary distribution gives probability
 \[
  \frac{\rdn-\rup}{\rdn+\rnup-\rup}\text{ on }0,\qquad\text{and}\qquad\frac{\rnup}{\rdn+\rnup-\rup}\text{ on }\{1,\,2,\,\dots\}.
 \]
 When the chain is at 0, the two walkers coincide, and we have a single rate with \(m=2\) for one of them to move. This coincides with \(\rnup\). Otherwise both will have rates with \(m=1\) to move; the one behind has rate \(\rdn\) while the one ahead has rate \(\rup\). The center of mass of the two walkers moves \(\frac12\) units with these rates, hence has velocity
 \[
  C=\frac\rnup2\cdot\frac{\rdn-\rup}{\rdn+\rnup-\rup}+\frac{\rdn+\rup}2\cdot\frac{\rnup}{\rdn+\rnup-\rup}=\frac{\rnup\rdn}{\rdn+\rnup-\rup}.
 \]
 
 Denote the parameter of zero range on the left by \(\te\). For case \ref{it:qzrp}, with the notations of the statement of Theorem \ref{tm:multi},
 \[
  \rnup=\bigl(1-\e{-2\be}\bigr)\bigl(\ac-\e{\te+2\be}\bigr),\qquad\rup=\bigl(1-\e{-\be}\bigr)\bigl(\ac-\e{\te+2\be}\bigr),\qquad\rdn=\bigl(1-\e{-\be}\bigr)\bigl(\ac-\e{\te+\be}\bigr),
 \]
 and the velocity turns into
 \[
  \begin{aligned}
   C&=\frac{\bigl(1-\e{-2\be}\bigr)\bigl(1-\e{-\be}\bigr)\bigl(\ac-\e{\te+2\be}\bigr)\bigl(\ac-\e{\te+\be}\bigr)}{\bigl(1-\e{-\be}\bigr)\bigl(\e{\te+2\be}-\e{\te+\be}\bigr)+\bigl(1-\e{-2\be}\bigr)\bigl(\ac-\e{\te+2\be}\bigr)}\\
   &=\frac{\bigl(1-\e{-2\be}\bigr)\bigl(\ac-\e{\te+2\be}\bigr)\bigl(\ac-\e{\te+\be}\bigr)}{\bigl(\e{\te+2\be}-\e{\te+\be}\bigr)+\bigl(1+\e{-\be}\bigr)\bigl(\ac-\e{\te+2\be}\bigr)}=\frac{\bigl(1-\e{-2\be}\bigr)\bigl(\ac-\e{\te+2\be}\bigr)\bigl(\ac-\e{\te+\be}\bigr)}{\bigl(1+\e{-\be}\bigr)\ac-2\e{\te+\be}}.
  \end{aligned}
 \]
 The Rankine-Hugoniot velocity reads
 \[
  V=\frac{\Hc\bigl(\vr(\te+2\be)\bigr)-\Hc\bigl(\vr(\te)\bigr)}{\vr(\te+2\be)-\vr(\te)}=\frac{\e{\te+2\be}-\e\te}{\vr(\te+2\be)-\vr(\te+\be)+\vr(\te+\be)-\vr(\te)}.
 \]
 We now return to \eqref{eq:rhos}:
 \[
  \vr(\te+2\be)-\vr(\te+\be)+\vr(\te+\be)-\vr(\te)=\frac{\e{\te+2\be}}{\bc-\e{\te+2\be}}+\frac{\e{\te+\be}}{\bc-\e{\te+\be}},
 \]
 hence 
 \[
  V=\frac{\bigl(\e{\te+2\be}-\e\te\bigr)\bigl(\bc-\e{\te+2\be}\bigr)\bigl(\bc-\e{\te+\be}\bigr)}{\e{\te+2\be}\bigl(\bc-\e{\te+\be}\bigr)+\e{\te+\be}\bigl(\bc-\e{\te+2\be}\bigr)}=C
 \]
 as required once \(\bc=\ac\) is substituted as was done for case \ref{it:qzrp}.

 For case \ref{it:ezrp}, change the sign of \(\ac\), while for case \ref{it:egzrp}, turn \(\ac\) into \(\bc\). In both cases change the sign of \(\be\) as well. In the last step for case \ref{it:ezrp}, substitute \(\bc=-\ac\) as was done in that case. Otherwise the proof is word for word identical.
\end{proof}

\section*{Acknowledgements}

The authors thank anonymous referees for pointing out typos and ways to improve the manuscript. M.\ B.\ also thanks Frank Redig for valuable discussions on duality and its connections to random walking shocks.

\bibliographystyle{plain}
\bibliography{refsmarton}

\end{document}